\newtheorem{theorem}{Theorem}[section]
\newtheorem{lemma}[theorem]{Lemma}
\newtheorem{result}[theorem]{Result}
\theoremstyle{definition}
\newtheorem*{definition}{Definition}
\def\PG{\mathrm{PG}} \def\AG{\mathrm{AG}} 
  \def\Persp{\mathrm{Persp}}
\def\Aut{\mathrm{Aut}}
\def\PGammaL{\mathrm{P}\Gamma\mathrm{L}}
\def\PGL{\mathrm{PGL}} 
\def\GL{\mathrm{GL}}
\def\A{\mathcal{A}} \def\B{\mathcal{B}} 
\def\D{\mathcal{D}}  
 \def\K{\mathcal{K}} \def\FF{\mathcal{F}}
\def\L{\mathcal{L}} \def\M{\mathcal{M}} 
 \def\P{\mathcal{P}}
 \def\S{\mathcal{S}}
\def\F{\mathbb{F}}
\def\a{\alpha}
\title{Linear representations of subgeometries}
\author{Stefaan De Winter\thanks{Supported by Michigan Technological University REF grant R01289} \and Sara Rottey\thanks{Partially supported by Michigan Technological University REF grant R01289} \and Geertrui Van de Voorde\thanks{Supported by the Fund for Scientific Research -- Flanders (FWO).}}
\begin{document}
\maketitle
\begin{abstract}The linear representation $T_n^*(\K)$ of a point set $\K$ in a hyperplane of $\PG(n+1,q)$ is a point-line geometry embedded in this projective space. In this paper, we will determine the isomorphisms between two linear representations $T_n^*(\K)$ and $T_n^*(\K')$, under a few conditions on $\K$ and $\K'$.
%We introduce the notion of the closure $\overline{\K}$ of a point set $\K$, which is the smallest subgeometry in which $\K$ is contained.
First, we prove that an isomorphism between $T_n^*(\K)$ and $T_n^*(\K')$ is induced by an isomorphism between the two linear representations $T_n^*(\overline{\K})$ and $T_n^*(\overline{\K'})$ of their closures $\overline {\K}$ and $\overline{\K'}$.

This allows us to focus on the automorphism group of a linear representation $T_n^*(\S)$ of a subgeometry $\S\cong\PG(n,q)$ embedded in a hyperplane of the projective space $\PG(n+1,q^t)$.
To this end we introduce a geometry $X(n,t,q)$ and determine its automorphism group. The geometry $X(n,t,q)$ is a straightforward generalization of $H_{q}^{n+2}$ which is known to be isomorphic to the linear representation of a Baer subgeometry. By providing an elegant algebraic description of $X(n,t,q)$ as a coset geometry we extend this result and prove that $X(n,t,q)$ and $T_n^*(\S)$ are isomorphic. %Hence, we can conclude their automorphism groups are isomorphic.

Finally, we compare the full automorphism group of $T^*_n(\S)$ with the ``natural'' group of automorphisms that is induced by the collineation group of its ambient space. %This allows to correct a misconception that has appeared in the literature.
\end{abstract}
{\bf Keywords:} Linear representation, automorphism group, subgeometry, coset geometry

\noindent
{\bf MSC:} 51E20

\section{Introduction}\label{introduction}
In finite geometry, one often considers geometries that are embedded in a projective or affine space. If  $G_1$ and $G_2$ are two such geometries, embedded in the same space, we may ask the following question:

\begin{itemize}\item[(Q)]
Is every isomorphism between $G_1$ and $G_2$ induced by a collineation of the ambient space?
\end{itemize}

Of course, the answer to this question will strongly depend on the properties of the geometries $G_1$ and $G_2$, as well as on the type of embedding considered. For example, it is well known that this question has an affirmative answer for the standard embeddings of finite classical polar spaces and Segre varieties \cite{GGG}, and  has  been studied for various other types of geometries such as generalised quadrangles \cite{PayneThas} and semipartial geometries \cite{StefaanTSPG}.

In this paper, we study the problem (Q) for a particular but broad class of geometries, namely {\em linear representations}.

The  linear representation $T_2^*(\mathcal{O})$, where $\mathcal{O}$ is a hyperoval, was introduced by Ahrens and Szekeres \cite{AS} and independently by Hall \cite{MH}.  This definition was extended to the linear representation of general point sets in  \cite{DeClerck}.
\begin{definition} Consider the projective space $\PG(n+1,q)$ for some prime power $q$. Let $H_\infty$ be a hyperplane in $\PG(n+1,q)$ and let $\K$ be a point set in $H_\infty$. The {\em linear representation} $T_n^*(\K)$  of the point set $\K$ is the point-line incidence structure $(\P, \L)$ whose points and lines are as follows:
\begin{itemize}%\setlength{\itemindent}{3.5em}
\item[$\P:$] the {\em affine} points of $\PG(n+1,q)$, i.e. the points of $\PG(n+1,q)\setminus H_\infty$,
\item[$\L:$] the lines of $\PG(n+1,q)$ intersecting $H_\infty$ exactly in a point of $\K$.
\end{itemize}
Incidence is natural.
\end{definition}

We see that a linear representation $T_n^*(\K)$ in $\PG(n+1,q)$ is entirely determined by the point set $\K$ at infinity.

Linear representations are mostly studied for point sets $\K$ that possess a lot of symmetry. For example, in the case $n=2$ and $\K$ a hyperoval, $T_2^*(\K)$ is a generalised quadrangle of order $(q-1,q+1)$.  Bichara, Mazzocca and Somma showed in \cite{bichara} that  for $\K, \K'$ hyperovals, $T_2^*(\K)\cong T_2^*(\K')$ if and only if $\K$ and $\K'$ are $\PGammaL$-equivalent. The answer to (Q) is given in the affirmative when $\K$ is a regular hyperoval in $\PG(2,q)$ in \cite{Grund}. When $\K$ is a Buekenhout-Metz unital, question (Q) is answered by De Winter in \cite{stefaan}; in this case, the linear representation $T_2^*(\K)$ is a {\em semipartial geometry}.
In \cite{wij2} the authors proved that the answer to (Q) for two linear representations $T_n^*(\K)$ and $T_n^*(\K')$ is affirmative provided certain conditions on $\K$ are fulfilled (see Result \ref{SGP}). When one of these conditions is not met, the automorphism group can be larger. % It is worth noticing that this last result includes all previously mentioned cases.
The same conditions might have been silently assumed when the authors of \cite[Corollary 7]{GuglielmoDesSpreads} answer the question (Q) affirmatively in the case of (generalised) linear representations; since in \cite{wij2} the authors have shown that the answer to question (Q) is in general negative for linear representations.

A {\em frame} of $\PG(n,q)$ is a set of $n+2$ points where each subset of $n+1$ points spans $\PG(n,q)$.
When a point set $\K$ of $\PG(n,q)$ contains a frame, the {\em closure} $\overline{\K}$ of $\K$ consists of the points of the smallest subgeometry of $\PG(n,q)$ containing $\K$. The case $\overline{\K}=H_\infty$ was treated in \cite{wij2}. This article is concerned with finding the full automorphism group of $T_n^*(\K)$ in the case $\overline{\K} \neq H_\infty$.

In Section \ref{sectionsubsets} we prove that an isomorphism between $T^*_n(\K)$ and $T^*_n(\K')$ is induced by an isomorphism between $T^*_n(\overline{\K})$ and $T^*_n(\overline{\K'})$. This explains why the rest of the paper is devoted to isomorphisms between $T^*_n(\S)$ and $T^*_n(\S')$ for subgeometries $\S \cong \S'$ of $H_\infty$.

In Section \ref{stabgeom} we introduce the geometry $X(n,t,q)$, which is a generalisation of the semipartial geometry $H_q^{n+2}$ that was introduced in \cite{DeClerck}. We will explore the automorphism group of $X(n,t,q)$ and prove that this group consists solely of collineations of its ambient space $\PG(n+t+1,q)$.

In Section \ref{sectionstefaan} we study $X(n,t,q)$ as a coset geometry, generalising the idea of translation semipartial geometries as introduced in \cite{StefaanTSPG}. In a group theoretical / algebraic way, we recover the isomorphism with $T_n^*(\S)$, where $\S$ is a subgeometry $\PG(n,q)$ of the hyperplane $H_\infty\cong \PG(n,q^t)$. This yields a useful
and elegant algebraic description of $T_n^*(\S)$ and its full automorphism group. It, for example, allows an explicit description of the automorphism group making it possible to perform computations in this group.

In Section \ref{sectionblowup} we prove that the automorphism group of $T_n^*(\S)$ is isomorphic to a specific collineation group of $\PG(t(n+1)+1,q)$, namely the collineation group stabilising the generalised linear representation isomorphic to $T_n^*(\S)$. These results in particular show that the automorphism group of $T_n^*(\S)$ is much larger than its automorphism group induced by $\PGammaL(n+2,q)$. We end with a full answer to the isomorphism problem of linear representations $T_n^*(\K)$ and $T_n^*(\K')$ of point sets $\K$, $\K'$ in $\PG(n,q^t)$, such that the closure of $\K,\K'$ is a subgeometry $\PG(n,q)$.

%This corrects the misconception that this induced automorphism group is the full automorphism group.
\section{Linear representations}\label{sectionsubsets}

First of all, we wish to emphasize the distinction between a {\em subspace} and a {\em subgeometry}. A subspace of $\PG(n,q)$ is a projective space $\PG(m,q)$ contained in $\PG(n,q)$, $m \leq n$, over the same finite field $\F_q$.
%An $n$-dimensional subgeometry of $\PG(n,q)$ is a projective space $\PG(n,q_0)$ contained in $\PG(n,q)$ over some subfield $\F_{q_0}$ of $\F_q$.
Now consider a subfield $\F_{q_0^{}}$ of $\F_q$; an $n$-dimensional subgeometry of $\PG(n,q)$ of order $q_{0}^{}$ is a set of $(q_0^{n+1}-1)/(q_0^{}-1)$ points whose homogeneous coordinates, with respect to a fixed frame of $\PG(n,q)$, are in $\F_{q_0^{}}$.

%\begin{definition} Consider the projective space $\PG(n+1,q)$ for some prime power $q$. Let $H_\infty=\PG(n,q)$ be a fixed hyperplane in $\PG(n+1,q)$ and let $\K$ be a point set in $H_\infty$. The {\em linear representation} $T_n^*(\K)$  of the point set $\K$ is the incidence structure $(\P, \L)$ as follows:
%\begin{itemize}%\setlength{\itemindent}{3.5em}
%\item[$\P:$] the {\em affine} points of $\PG(n+1,q)$, i.e. the points of $\PG(n+1,q)\setminus H_\infty$,
%\item[$\L:$] the lines of $\PG(n+1,q)$ intersecting $H_\infty$ in exactly a point of $\K$.
%\end{itemize}
%The incidence is natural.
%\end{definition}

In this paper we will consider a linear representation $T_n^*(\K)$ of a point set $\K$ contained in a hyperplane $H_\infty$, containing a frame and satisfying specific conditions.
In order to state these conditions we need the following two definitions.

\begin{definition}
We say a point set $\K$ of $\PG(n,q)$ has {\em  Property (\texttt{*})} if there is no plane of $\PG(n,q)$ intersecting $\K$ in only two intersecting lines or in two intersecting lines minus their intersection point.
\end{definition}

We will exclude all point sets that do not possess {\em  Property (\texttt{*})}. Note that in \cite{wij2}, considering a linear representation of a specific point set not having this property, an explicit example of an automorphism that is not a collineation of the ambient space is given.
%eigenlijk sterker, namelijk niet F_p- lineair?

\begin{definition}
If a point set $S$ of $\PG(n,q)$ contains a frame of $\PG(n,q)$, then its {\em closure} $\overline{S}$ consists of the points of the
smallest $n$-dimensional subgeometry of $\PG(n,q)$ containing all points of $S$.

The closure $\overline{S}$ of a point set $S$ can be constructed recursively as follows:
\begin{itemize}
\item[(i)] determine the set $\A$ of all subspaces of $\PG(n,q)$ spanned by an arbitrary number of points of $S$;
\item[(ii)] determine the set $\overline{S}$ of all points that arise as the exact intersection of any
two subspaces in $\A$, that is, $P\in \overline{S}$ if and only there exist two subspaces $\pi_1$ and $\pi_2$ spanned by points of $S$, such that $P=\pi_1\cap\pi_2$. If $\overline{S} \neq S$ replace $S$ by
$\overline{S}$ and go to~(i), otherwise stop.
\end{itemize}
\end{definition}
For $n=2$, this recursive construction coincides with the definition of the closure of a set of points in a plane containing a quadrangle, given in \cite[Chapter XI]{Hughes}.

The result in \cite{wij2} concerning the isomorphisms between linear representations $T_n^*(\K)$ and $T_n^*(\K')$ for point sets $\K$ and $\K'$, is the following.

\begin{result}{\rm \label{SGP}\cite{wij2}} Let $q>2$. Let $\K$ and $\K'$ denote point sets spanning $H_\infty\cong\PG(n,q)$, having Property (\texttt{*}) in $H_\infty$, such that the closure $\overline{\K}$ is equal to $H_\infty$. Let $\alpha$ be an isomorphism between $T_n^*(\K)$ and $T_n^*(\K')$.
Then $\alpha$ is induced by an element of the stabiliser $\PGammaL(n+2,q)_{H_\infty}$ of $H_\infty$ mapping $\K$ to $\K'$.
\end{result}

%We will now suppose that our set $\K$ does not meet the condition on its closure $\overline{\K}$. Consider a point set $\K$ of $H_\infty=\PG(n,q^t)$, $t\neq1$, such that $\overline{\K}=\S=\PG(n,q)$, i.e. is a subgeometry of $\PG(n,q^t)$.
We will prove that, if $\K$ or $\K'$ satisfies Property (\texttt{*}), an isomorphism between the linear representations $T_n^*(\K)$ and $T_n^*(\K')$ of two point sets $\K$ and $\K'$ is always induced by an isomorphism between the linear representations $T_n^*(\overline{\K})$ and $T_n^*(\overline{\K'})$ of their closures $\overline{\K}$ and $\overline{\K'}$.

We need the notion of {\em $\alpha$-rigid} subspaces (see \cite{wij2}):

\begin{definition} Let $\alpha$ be an isomorphism between $T_n^*(\K)$ and $T_n^*(\K')$.  We
will say that a $k$-subspace $\pi_\infty$ of $H_\infty$ is \emph{$\a$-rigid}
if for every $(k+1)$-subspace $\pi$ containing
$\pi_\infty$, but not contained in $H_\infty$, the point set $\{ \a(P)|P
\in \pi, P\notin H_\infty\}$ spans a $(k+1)$-subspace.
\end{definition}

It follows from the definition of a linear representation $T_n^*(\K)$ that every point of
$\K$ is $\beta$-rigid, for any isomorphism $\beta$.

We will need the following theorems of \cite{wij2}.

\begin{result}{\rm \label{intersectie}\cite{wij2}} If for some isomorphism $\a$, the subspaces $\pi_1$ and $\pi_2$ are $\a$-rigid
subspaces meeting in at least one point, then $\pi_1\cap \pi_2$ is an $\a$-rigid
subspace.
\end{result}
%\begin{result} \label{help}Let $q>2$. Suppose $\K$ or $\K'$ has Property ($*$). Let $P_1$ and $P_2$ be two points of $\K$, then $P_1P_2$ is rigid.\end{result}
For a line $L$ of $\PG(n+1,q)$ not in $H_\infty$, we define $\infty(L)$ to be the point $L\cap H_\infty$.

  An isomorphism $\beta$ of the affine points extends naturally to a mapping on the lines $L$ having a $\beta$-rigid point $\infty(L)$ at infinity, by defining $\beta(L)$ to be the line containing the points $\beta(R)$, $R\in L$.

\begin{result}{\rm \label{crucial}\cite{wij2}} Let $q>2$. Suppose $\K$ or $\K'$ has Property ($*$) and $\langle \K \rangle = \langle \K' \rangle = H_\infty$. Let $\a$ be an isomorphism between $T_n^*(\K)$ and $T_n^*(\K')$. We can define a mapping $\tilde{\a}$ on the set of $\a$-rigid points by putting $\tilde{\a}(Q)=\infty(\a(L))$ where $Q$ is an $\a$-rigid point and $L$ is any line for which $\infty(L)=Q$. This means, for two lines $L$ and $M$, if $\infty(L)=\infty(M)=Q$ is an $\a$-rigid point, then $\infty(\a(L))=\infty(\a(M))=\tilde{\a}(Q)$.\end{result}
This result shows that an isomorphism $\beta$ between $T_n^*(\K)$ and $T_n^*(\K')$ can be extended to a mapping on the $\beta$-rigid points $Q \in H_\infty$, and we abuse notation by putting $\beta(Q):=\tilde{\beta}(Q)$.
\begin{result}{\rm\label{opspanning}\cite{wij2}} Let $q>2$. Suppose $\K$ or $\K'$ has Property ($*$) and $\langle \K \rangle = \langle \K' \rangle = H_\infty$. Let $\a$ be an isomorphism between $T_n^*(\K)$ and $T_n^*(\K')$. If $P_1,\ldots,P_{k+1}$ are $\a$-rigid
points, then $\langle P_1,\ldots,P_{k+1}\rangle$ is an $\a$-rigid space.
\end{result}

In the following theorem we generalise Result \ref{SGP} for point sets $\K$ whose closure is not necessarily $H_\infty$.

\begin{theorem}\label{mainmap}
Let $\K$ and  $\K'$ be point sets in the hyperplane $H_\infty\cong\PG(n,q)$ such that $\S=\overline{\K}$ and $\S'=\overline{\K'}$ are $n$-dimensional subgeometries of $H_\infty$. If $\S=H_\infty$, then further suppose $\K$ satisfies Property $(*)$.
An isomorphism $\gamma$ between $T_n^*(\K)$ and $T_n^*(\K')$ is induced by an isomorphism between $T_n^*(\S)$ and $T_n^*(\S')$ that maps $\S$ onto $\S'$.

\end{theorem}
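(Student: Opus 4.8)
The plan is to show that $\gamma$, regarded as a permutation of the affine points of $\PG(n+1,q)$, already sends every line of $T_n^*(\S)$ to a line of $T_n^*(\S')$, and then to read off its induced action ``at infinity'' as a bijection $\S\to\S'$ that recovers $\gamma$ on $T_n^*(\K)$. First I would check that the results quoted from \cite{wij2} are applicable. Since $\overline{\K}$ is an $n$-dimensional subgeometry, $\K$ contains a frame, so $\langle\K\rangle=\langle\K'\rangle=H_\infty$. If $\S\neq H_\infty$, then $\S$ has order $q_0<q$, and a line of $\PG(n,q)$ meets such a subgeometry in at most $q_0+1<q+1$ points (two points of $\S$ on the line already span an $\F_{q_0}$-subline carrying all points of $\S$ on it); hence no line of $\PG(n,q)$ lies inside $\S$, so none lies inside $\K$, and $\K$ has Property $(*)$ automatically. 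Together with the hypothesis imposed when $\S=H_\infty$, Property $(*)$ holds in every case, so Results \ref{intersectie}, \ref{crucial} and \ref{opspanning} apply to $\gamma$ and, symmetrically, to $\gamma^{-1}$. This gives the extension $\tilde\gamma$ of $\gamma$ to the $\gamma$-rigid points, every point of $\K$ is $\gamma$-rigid, and using that $\gamma^{-1}$ likewise maps lines of $T_n^*(\K')$ to lines of $T_n^*(\K)$ one sees that $\tilde\gamma$ restricts to a bijection $\K\to\K'$.

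Next I would prove that \emph{every point of $\S$ is $\gamma$-rigid}, by induction along the recursive construction of the closure: the points of $\K$ are $\gamma$-rigid; by Result \ref{opspanning} every subspace spanned by $\gamma$-rigid points is $\gamma$-rigid, so all subspaces in the set $\A$ built from the current point set are $\gamma$-rigid; and by Result \ref{intersectie} the (nonempty) intersection of two $\gamma$-rigid subspaces is again $\gamma$-rigid, so the next point set in the construction consists of $\gamma$-rigid points. As the construction stabilises at $\overline{\K}=\S$, all of $\S$ is $\gamma$-rigid; symmetrically, all of $\S'$ is $\gamma^{-1}$-rigid.

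The technical core is to show that $\tilde\gamma$ extends to the $\gamma$-rigid subspaces encountered here and is compatible with spanning and with intersection. Given a $\gamma$-rigid $(k+1)$-space $\pi$ not inside $H_\infty$, one argues from the definition of $\tilde\gamma$ (Result \ref{crucial}) and Result \ref{opspanning} that $\gamma$ maps the affine part of $\pi$ onto the affine part of a $(k+1)$-space $\sigma$, and that $\sigma\cap H_\infty$ is the $k$-space spanned by the $\tilde\gamma$-images of any $\gamma$-rigid frame of $\pi\cap H_\infty$; this is independent of the choices made, since $\sigma$ depends only on $\pi\cap H_\infty$. Putting $\tilde\gamma(\pi\cap H_\infty):=\sigma\cap H_\infty$ yields $\tilde\gamma(\langle P_1,\dots,P_{k+1}\rangle)=\langle\tilde\gamma(P_1),\dots,\tilde\gamma(P_{k+1})\rangle$ for $\gamma$-rigid $P_i$, and then a Grassmann-formula dimension count gives $\tilde\gamma(\pi_1\cap\pi_2)=\tilde\gamma(\pi_1)\cap\tilde\gamma(\pi_2)$ whenever $\pi_1,\pi_2$ are $\gamma$-rigid, spanned by $\gamma$-rigid points, and meet in a point. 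Since $\tilde\gamma$ thus commutes with the two operations from which the closure is built and $\tilde\gamma(\K)=\K'$, it carries each stage of the recursive construction of $\overline{\K}$ onto the corresponding stage for $\overline{\K'}$; hence $\tilde\gamma(\S)=\S'$.

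Finally, set $\bar\gamma:=\gamma$ on the (common) point set of $T_n^*(\S)$ and $T_n^*(\S')$. For a line $L$ of $T_n^*(\S)$ the point $\infty(L)\in\S$ is $\gamma$-rigid, so $\gamma$ maps the $q$ affine points of $L$ into a line with point at infinity $\tilde\gamma(\infty(L))\in\S'$, and by counting it maps them onto the $q$ affine points of that line, which is therefore a line of $T_n^*(\S')$. The symmetric argument for $\gamma^{-1}$ (using that $\S'$ consists of $\gamma^{-1}$-rigid points and $\widetilde{\gamma^{-1}}(\S')=\S$) shows $\bar\gamma$ is an isomorphism $T_n^*(\S)\to T_n^*(\S')$, whose action at infinity is $\tilde\gamma|_\S$ and thus maps $\S$ onto $\S'$. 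Because $\tilde\gamma(\K)=\K'$, the restriction of $\bar\gamma$ to $T_n^*(\K)$ is exactly $\gamma$, so $\gamma$ is induced by $\bar\gamma$. The one genuinely delicate step is the functoriality of $\tilde\gamma$ in the previous paragraph, i.e.\ its compatibility with spans and intersections together with the well-definedness on subspaces; everything else is bookkeeping on top of the quoted results of \cite{wij2}.
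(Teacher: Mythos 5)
Your skeleton matches the paper's: establish Property (\texttt{*}), use Results \ref{intersectie} and \ref{opspanning} along the recursive construction of the closure to show every point of $\S$ is $\gamma$-rigid, extend $\gamma$ to infinity via Result \ref{crucial}, show the extended map respects the closure construction so that $\S$ is carried onto $\S'$, and finish by the symmetric argument for $\gamma^{-1}$. The gap sits exactly at the step you yourself call delicate, and it is genuine as written. You define $\tilde\gamma(\pi\cap H_\infty):=\sigma\cap H_\infty$ and justify well-definedness by saying ``$\sigma$ depends only on $\pi\cap H_\infty$'' --- but $\sigma$ manifestly depends on the choice of the $(k+1)$-space $\pi$ through the rigid $k$-space; what has to be proved is precisely that $\sigma\cap H_\infty$ does not, so the justification is circular. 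Likewise, the claim that $\sigma\cap H_\infty$ \emph{equals} the span of the $\tilde\gamma$-images of a rigid spanning set of $\pi\cap H_\infty$ has an easy inclusion (lines through those points inside $\pi$ map into $\sigma$), but equality amounts to the statement that $\tilde\gamma$ sends projectively independent rigid points to independent points, and neither Result \ref{crucial} nor Result \ref{opspanning} gives that. This non-collapse of dimension is also exactly what your Grassmann count needs: without knowing $\dim\tilde\gamma(\pi_i)=\dim\pi_i$ and $\dim\langle\tilde\gamma(\pi_1),\tilde\gamma(\pi_2)\rangle=\dim\langle\pi_1,\pi_2\rangle$, you cannot conclude that $\tilde\gamma(\pi_1)\cap\tilde\gamma(\pi_2)$ is a single point, hence cannot place the image of an exact-intersection point into the next stage of the closure of $\K'$. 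The claim is true and repairable --- e.g.\ prove by induction on $k$, using that $\gamma$ is a bijection on affine points and that (by counting $q^k$ points) the affine part of a space through a rigid $(k-1)$-space maps \emph{onto} the affine part of its image space, that $\tilde\gamma(P_{k+1})\in\langle\tilde\gamma(P_1),\dots,\tilde\gamma(P_k)\rangle$ would force the affine part of a line $\langle U,P_{k+1}\rangle$ into that space, contradicting injectivity of $\gamma$ --- but this argument is missing, and it is not mere bookkeeping.

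For comparison, the paper avoids this machinery altogether: it never extends $\tilde\gamma$ to subspaces or counts dimensions, but only shows that three collinear points $P,Q,R$ of $\S$ have collinear images (consider the plane $\langle U,L\rangle$ over an affine point $U$ and the rigid line $L$), and then appeals to the recursive construction of $\S$ and $\S'$ together with $\tilde\gamma(\K)=\K'$; your heavier functoriality route buys a more explicit statement but at the price of the unproven independence claim. One further small point: Property (\texttt{*}) also forbids planes meeting $\K$ in two intersecting lines \emph{minus their common point}, so you must exclude lines-minus-a-point inside $\K$ as well as full lines; this does hold, since a line of $H_\infty$ meets a proper subgeometry of order $q_0$ in at most $q_0+1\le q-1<q$ points, but your inequality $q_0+1<q+1$ only rules out full lines.
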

\begin{proof}
If the subgeometry $\S$ is not the whole hyperplane $H_\infty$, then the set $\K$ spans $H_\infty$, but it does not contain full lines of $H_\infty$, nor full lines minus one point, and thus it also has Property ($*$). By the recursive construction of the closure of a set of points, we conclude, invoking Result \ref{intersectie} and Result \ref{opspanning}, that all points of $\S$ are $\gamma$-rigid. Hence $\gamma$ maps the affine points of a projective line intersecting $H_\infty$ in exactly one point of $\S$, onto the affine points of a projective line  intersecting $H_\infty$ in exactly one point.

Making use of Result \ref{crucial}, we know that lines through a $\gamma$-rigid point at infinity are mapped to lines intersecting each other in a point at infinity. As said before, we abuse notation and use $\gamma$ for the extension of $\gamma$ to all $\gamma$-rigid points of $H_\infty$. Now, let $P,Q,R$ be three points of $\S$ on a line $L$ of $H_\infty$ and let $U$ be a point, not contained in $H_\infty$. Since $L$ is a $\gamma$-rigid line, $\gamma$ maps the points of $\langle U,L\rangle$ onto points of a plane containing $\gamma(P), \gamma(Q)$, and $\gamma(R)$ at infinity. This implies that $\gamma$ also maps collinear points of $\S$ to collinear points of $H_\infty$. Since $\K$ is mapped to $\K'$, and collinearity needs to be preserved, clearly the points of $\S$ are mapped to the points of $\S'$ (keeping the recursive construction of $\S$ and $\S'$ in mind).

With the same argument, the points of $\S'$ are $\gamma^{-1}$-rigid, and collinear points of $\S'$ are mapped by $\gamma^{-1}$ to collinear points of $H_\infty$, thus belonging to $\S$. We conclude that $\gamma$ is induced by an isomorphism between $T_n^*(\S)$ and $T_n^*(\S')$ that maps $\S$ onto $\S'$ (preserving collinearity of points of $\S$).
\end{proof}

The automorphism group of $T_n^*(\K)$ where $\overline{\K}=H_\infty$ has been determined in \cite{wij2}.
It follows from Theorem \ref{mainmap} that we only need to consider the automorphism group of $T_n^*(\S)$ where $\S$ is a subgeometry of $H_\infty$.
%There the authors proved that $\gamma$ is induced by an element of $\PGammaL(n+1,q)_{H_\infty}$. In the following sections we are concerned with the case $\S\neq H_\infty$. By Theorem \ref{mainmap} we see that $\S\cong\S'$, hence we only need to find the automorphism group of $T_n^*(\S)$. We will prove that in this case there exist automorphisms that are not induced by an element of $\PGammaL(n+1,q)_{H_\infty}$.
%
%So, from now on we will only consider the linear representation of a non-trivial $n$-dimensional subgeometry of $H_\infty$. Let $\S\cong\PG(n,q)$ be a subgeometry of $H_\infty\cong\PG(n,q^t)$, $q$ a prime power, $t \in \N_0 \backslash \{1\}$, and embed $H_\infty$ in $\PG(n+1,q^t)$; consider the linear representation $T^*_{n}(\S)$ of $\S$.
%\begin{remark}
%This case covers all possibilities of non-trivial subgeometries, since a subgeometry $\PG(n,q^i)$ of $\PG(n,q^t)$ can be written as the subgeometry $\PG(n,\tilde{q})$ of $\PG(n,\tilde{q}^{\frac{t}{i}})$ where $\tilde{q}=q^i$ since $i|t$.
%\end{remark}

\section{The geometry $X(n,t,q)$ and its automorphism group}\label{stabgeom}
\begin{definition} Consider an $n$-dimensional subspace $\pi$ of the projective space $\PG(n+t,q)$. The geometry $X(n,t,q)$ is the incidence structure $(\P, \L)$ with natural incidence and with
\begin{itemize}
\item[$\P$:] the $(t-1)$-spaces of $\PG(n+t,q)$ skew to $\pi$,
\item[$\L$:] the $t$-spaces of $\PG(n+t,q)$ meeting $\pi$ in exactly one point.
\end{itemize}
\end{definition}

For $t=2$, this geometry was introduced in \cite{DeClerck} as $H^{n+2}_{q}$. The geometry $H^{n+2}_{q}$ was also given in \cite{Debroey} as an example of a semipartial geometry that is not a partial geometry for $n\geq2$. There the author proved that for $n=2$ the geometry $H_q^4$ is isomorphic to $T_2^*(\B)$ where $\B$ is a Baer subplane of $\PG(2,q^2)$. In Section \ref{sectionstefaan} we will prove this for general $n$ and $t$, namely $X(n,t,q) \cong T^*_n(\S)$, where $\S$ is an $n$-dimensional subgeometry over $\F_q$ of $\PG(n,q^t)$.

%Two linear representations $T_n^*(\S)$ and  $T_n^*(\S')$ embedded in $\PG(n,q^t)$ for which $\S\cong\S'\cong\PG(n,q)$ are both isomorphic to the geometry $X(n,t,q)$. Hence, any isomorphism between $T_n^*(\S)$ and  $T_n^*(\S')$ corresponds to an automorphism of $X(n,t,q)$.
First, we will determine the full automorphism group of $X(n,t,q)$.

Consider the embedding of the geometry $X(n,t,q)$ in the projective space $\PG(n+t,q)$ where $\pi$ is an $n$-dimensional subspace. We will prove that an automorphism of $X(n,t,q)$ %extends to an incidence preserving mapping on the points of $\PG(n+t,q)$ and hence 
is induced by a collineation of its ambient space.

\begin{lemma}
All automorphisms of $X(n,t,q)$ are induced by collineations of its ambient space; more precisely, $\Aut(X(n,t,q))\cong \PGammaL(n+t+1,q)_{\pi}$.
\end{lemma}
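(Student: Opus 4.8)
The plan is to show that an abstract automorphism $\varphi$ of $X(n,t,q)$ must preserve enough of the incidence structure to be recognized, via the fundamental theorem of projective geometry, as a collineation of $\PG(n+t,q)$ fixing $\pi$. The natural route is to pass to a more familiar object on which $\varphi$ acts, namely the \emph{quotient space} $\PG(n+t,q)/\pi \cong \PG(t,q)$: a $(t-1)$-space skew to $\pi$, together with $\pi$, spans the whole of $\PG(n+t,q)$, and a $t$-space meeting $\pi$ in a point projects to a line of this quotient. So first I would set up the correspondence between the points and lines of $X(n,t,q)$ and certain substructures ``transverse'' to $\pi$, and verify that two points of $X(n,t,q)$ are collinear precisely when the corresponding $(t-1)$-spaces lie in a common $t$-space meeting $\pi$ in a point — equivalently, when they intersect in a $(t-2)$-space and their span meets $\pi$ in exactly one point.

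Next I would recover the ambient projective structure. The key step is to show that $\varphi$ permutes the points of $\pi$ (or, better, the ``parallel classes'' of lines of $X(n,t,q)$, i.e.\ the $t$-spaces through a fixed point of $\pi$) in a way compatible with the linear structure. Concretely: the lines of $X(n,t,q)$ through a common point of $\pi$ form a spread-like family, and one can detect, purely combinatorially, when two lines of $X(n,t,q)$ meet $\pi$ in the same point versus in collinear points of $\pi$. Using the subgeometry identification $X(n,t,q)\cong T_n^*(\S)$ proved in Section~\ref{sectionstefaan} would in fact let me invoke the rigidity machinery (Results~\ref{intersectie}--\ref{opspanning}) exactly as in Theorem~\ref{mainmap}; but since that identification is only established later, the cleaner self-contained argument is to work directly inside $\PG(n+t,q)$: reconstruct the points of $\pi$ as the equivalence classes of lines of $X(n,t,q)$ under ``having a common transversal pattern,'' reconstruct the lines of $\pi$ from pencils of such classes, and thereby show $\varphi$ induces a collineation $\bar\varphi$ of $\pi\cong\PG(n,q)$.

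I would then lift this to a collineation of all of $\PG(n+t,q)$: having fixed $\bar\varphi$ on $\pi$ and the action on the quotient $\PG(t,q)$, the affine-type structure on $\PG(n+t,q)\setminus\pi$ (whose ``points'' one can take to be the $(t-1)$-spaces skew to $\pi$, i.e.\ the points of $X(n,t,q)$, with the lines of $X(n,t,q)$ as lines) is, after the identification of Section~\ref{sectionstefaan}, just the linear representation $T_n^*(\S)$; collinearity in $X(n,t,q)$ plus the induced map on $\pi$ determines a semilinear map by the standard coordinatization argument. This shows $\varphi$ extends to an element $g\in\PGammaL(n+t+1,q)$, and since $g$ fixes $\pi$ setwise we get $g\in\PGammaL(n+t+1,q)_\pi$. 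Conversely every element of $\PGammaL(n+t+1,q)_\pi$ clearly permutes the $(t-1)$-spaces skew to $\pi$ and the $t$-spaces meeting $\pi$ in a point, hence induces an automorphism of $X(n,t,q)$; the kernel of this action is trivial because a collineation fixing $\pi$ pointwise and fixing every $(t-1)$-space skew to $\pi$ is the identity. Hence $\Aut(X(n,t,q))\cong\PGammaL(n+t+1,q)_\pi$.

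\textbf{Main obstacle.} The delicate point is the first reconstruction step: showing that an abstract automorphism respects the distinction ``two lines of $X(n,t,q)$ meet $\pi$ in the same point'' versus ``in distinct (collinear) points of $\pi$'', purely from the combinatorics of collinearity in $X(n,t,q)$ — in other words, that $\varphi$ is forced to preserve the parallelism of lines and the subspace $\pi$ itself. This is precisely the phenomenon that fails without Property~$(*)$-type hypotheses, which is why the analysis must use the specific geometry of $(t-1)$- and $t$-spaces in general position with respect to $\pi$ (equivalently, the rigidity results of \cite{wij2} once the identification with $T_n^*(\S)$ is in hand), rather than any soft general argument. Once $\pi$ is reconstructed and $\bar\varphi$ is known to be a collineation of $\pi$, the remaining steps are routine applications of the fundamental theorem of projective geometry.
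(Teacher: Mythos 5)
There is a genuine gap: your plan repeatedly defers the step that constitutes essentially the entire content of the lemma. From an abstract automorphism $\varphi$ you only have a permutation of the $(t-1)$-spaces skew to $\pi$ preserving the $t$-spaces that meet $\pi$ in a point; the work is to descend from this to a well-defined, incidence-preserving action on \emph{all} relevant subspaces of $\PG(n+t,q)$ -- in particular on the points off $\pi$, on the points of $\pi$, and on the lines of $\pi$ -- before any appeal to the fundamental theorem of projective geometry is possible. The paper does exactly this by a double downward induction (from $(t-1)$-spaces to $(t-2)$-spaces, then simultaneously on $k$-spaces disjoint from $\pi$ and $(k+1)$-spaces meeting $\pi$ in a point, down to points and lines), and this reconstruction is the proof. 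Your proposal labels this the ``main obstacle'' and gestures at two substitutes, neither of which works as stated. First, the rigidity machinery of \cite{wij2} applied through the identification $X(n,t,q)\cong T_n^*(\S)$ (which, besides being established only later in the paper) lives in the ambient space $\PG(n+1,q^t)$: it yields at most that $\varphi$ preserves parallel classes and induces a collineation of $\S\cong\pi$, but it cannot yield that the action on the affine points is $\F_q$-semilinear/affine -- indeed, automorphisms of $T_n^*(\S)$ are in general \emph{not} induced by collineations of $\PG(n+1,q^t)$, which is the very phenomenon the paper is about. Second, the ``standard coordinatization argument''/FTPG does not apply directly, because in the underlying $\F_q$-affine space the lines of $X(n,t,q)$ through a point cover only the rank-one directions (the Segre-variety directions), not all directions, so ``collinearity plus the map on $\pi$'' does not hand you a semilinear map without an argument of exactly the inductive type the paper carries out.

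Two smaller inaccuracies: the quotient of $\PG(n+t,q)$ by the $n$-space $\pi$ is $\PG(t-1,q)$, not $\PG(t,q)$, and a $t$-space meeting $\pi$ in a point (like a $(t-1)$-space skew to $\pi$) projects onto the whole quotient rather than onto a line of it, so the quotient picture you open with does not carry the information you want; also, two points of $X(n,t,q)$ meeting in a $(t-2)$-space automatically span a $t$-space meeting $\pi$ in exactly one point, so the extra condition in your collinearity criterion is redundant. The final parts of your plan (that every element of $\PGammaL(n+t+1,q)_\pi$ induces an automorphism, and that the induced action is faithful) are correct and routine, but the core of the lemma remains unproved in your proposal.
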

\begin{proof}
We will prove that every automorphism of $X(n,t,q)$ is induced by a mapping on the points and lines of $\PG(n+t,q)$ which is incidence preserving, hence is a collineation.

Every automorphism $\psi$ of $X(n,t,q)$ is a permutation of the $(t-1)$-dimensional subspaces of
$\PG(n + t, q)$ disjoint from $\pi$.
Consider a $(t-2)$-dimensional space $\mu$ disjoint from $\pi$, and take two elements $\nu_1,\nu_2 \in {\P}$ such that $\nu_1 \cap \nu_2 =\mu$. We clearly have $\langle \nu_1, \nu_2 \rangle \in {\L}$.
Since $\psi$ preserves ${\L}$, $\psi$ sends $\nu_1$ and $\nu_2$ to two elements $\nu_1'$ and $\nu_2'$ of ${\P}$ lying in an element of ${\L}$, thus intersecting in a $(t-2)$-dimensional space $\mu'$.

Now consider $\nu_3 \in {\P}$ containing $\mu$, but not lying in $\langle \nu_1,\nu_2\rangle$. As seen before, its image $\nu_3'$ intersects both $\nu_1'$ and $\nu_2'$ in a $(t-2)$-dimensional space. Hence $\nu_3' \cap \nu_1' =\nu_3' \cap \nu_2' =\nu_1' \cap \nu_2'=\mu'$, since otherwise $\nu_1', \nu_2'$ and $\nu_3'$ would lie in one and the same element of ${\L}$, a contradiction.
It follows that $\psi$ extends to a well-defined mapping on the $(t-2)$-dimensional subspaces $\mu$ of $\PG(n + t, q)$ disjoint from $\pi$ by putting $\psi(\mu):=\psi(\nu_1)\cap \psi(\nu_2)$ for $\nu_1,\nu_2\in \P$ with $\nu_1\cap \nu_2=\mu$. Furthermore, $\psi$ preserves incidence between these $(t-2)$-spaces and the $(t-1)$-spaces of ${\P}$.

In other words, for $k=t-1$ the map $\psi$ extends to a mapping:
\begin{itemize}
\item[(i)] permuting $(k+1)$-subspaces intersecting $\pi$ in exactly one point, such that incidence with the $k$-subspaces disjoint from $\pi$ is preserved,
\item[(ii)] permuting $k$-subspaces disjoint from $\pi$,
\item[(iii)] permuting $(k-1)$-subspaces disjoint from $\pi$, such that incidence with the $k$-subspaces disjoint from $\pi$ is preserved and such that incidence with the $(k+1)$-subspaces intersecting $\pi$ in one point is preserved.
\end{itemize}

Now we continue by induction. Suppose that for some $k \leq t-1$, the previous three properties are valid. We will prove they also hold for $k-1$. Clearly, property (ii) for $k-1$ follows from property (iii) for $k$. We proceed in two steps, first proving (i) and afterwards (iii).
%\begin{itemize}
%\item permuting subspaces intersecting $\pi$ in exactly one point of dimension $k$, such that incidence with the $(k-1)$-dimensional subspaces disjoint from $\pi$ is preserved,
%\item permuting subspaces disjoint from $\pi$ of dimension $k-2$, such that incidence with the $(k-1)$-dimensional subspaces disjoint from $\pi$ is preserved and such that incidence with the $k$-dimensional subspaces intersecting $\pi$ in one point is preserved.
%\end{itemize}

Consider a $k$-dimensional space $\alpha$ intersecting $\pi$ in exactly one point $P$, and take two $(k+1)$-dimensional spaces $\beta_1, \beta_2$ intersecting $\pi$ only in $P$ such that $\beta_1 \cap \beta_2 =\alpha$. The map $\psi$ preserves the $(k-1)$-dimensional spaces of $\PG(n+t,q)$, disjoint from $\pi$, contained in $\alpha$ and their incidence with $\beta_1$ and $\beta_2$. The subspaces $\beta_1$ and $\beta_2 $ are mapped to two $(k+1)$-dimensional spaces $\beta_1'$ and $\beta_2'$, intersecting in a subspace $\alpha'$. The subspace $\a'$ needs to contain exactly $q^k$ $(k-1)$-dimensional spaces of $\PG(n+t,q)$ disjoint from $\pi$, hence $\alpha'$ is a $k$-dimensional subspace intersecting $\pi$ in a point $P'$. It follows that this point $P'$ is the point at infinity for both $\beta_1'$ and $\beta'_2$.

Consider a third $(k+1)$-dimensional space $\beta_3$, intersecting $\pi$ in exactly $P$, and containing $\alpha$. Its image $\beta_3'$, intersects both $\beta_1'$ and $\beta_2'$ in a $k$-dimensional space. Since $\psi$ acts bijectively on $(k-1)$-dimensional subspaces disjoint from $\pi$ and preserves their incidence with $(k+1)$-spaces intersecting $\pi$ in one point, the intersections $\beta_3' \cap \beta_1'$, $\beta_3' \cap \beta_2'$ and $\beta_1' \cap \beta_2'$ all contain the same $(k-1)$-dimensional subspaces and, again by counting, we see that they each coincide with the same $k$-dimensional subspace $\alpha'$ intersecting $\pi$ in $P'$.
It follows that $\psi$ extends to a well-defined mapping on the $k$-dimensional subspaces of $\PG(n + t, q)$ intersecting $\pi$ in one point, preserving incidence with the $(k-1)$-dimensional subspaces disjoint from $\pi$.

We now continue with the second step of the induction.
Consider a $(k-2)$-dimensional subspace $\gamma$ of $\PG(n+t,q)$ disjoint from $\pi$. Take two $(k-1)$-dimensional subspaces $\delta_1$ and $\delta_2$, disjoint from $\pi$, such that $\delta_1 \cap \delta_2 = \gamma$ and such that $\langle \delta_1, \delta_2 \rangle = \epsilon$ is a $k$-dimensional space intersecting $\pi$ in exactly one point $Q$.
From the previous part, we have that their images $\delta_1'$ and $\delta_2'$ are contained in $\epsilon'$, which is the image of $\epsilon$, hence they intersect in a $(k-2)$-dimensional subspace $\gamma'$, disjoint from $\pi$.

Consider a third $(k-1)$-dimensional subspace $\delta_3$ containing $\gamma$, disjoint from $\pi$, not contained in $\epsilon$, such that $\langle \delta'_1, \delta'_2, \delta'_3 \rangle$ is a $(k+1)$-dimensional space intersecting $\pi$ in the point $Q$.
From the above, the image $\delta_3'$ intersects both $\delta_1' $ and $\delta_2'$ in a $(k-2)$-dimensional space. The intersection $\delta_3' \cap \delta_1'$ equals $\delta_3' \cap \delta_2'$, since otherwise $\langle \delta_1, \delta_2, \delta_3 \rangle$ would be $k$-dimensional, a contradiction because $\psi$ preserves $(k+1)$-dimensional spaces intersecting $\pi$ in exactly in one point.

Hence, the map $\psi$ extends to a well-defined mapping on the $(k-2)$-dimensional subspaces of $\PG(n + t, q)$, disjoint from $\pi$, preserving incidence with the $k$-dimensional subspaces intersecting $\pi$ in one point, and with the $(k-1)$-dimensional subspaces disjoint from $\pi$.

By induction we see that the map $\psi$ extends to a well-defined incidence preserving mapping on the $m$-dimensional spaces, $0 \leq m \leq t-1$,  disjoint from $\pi$ and the $l$-dimensional spaces, $1 \leq l \leq t$, intersecting $\pi$ in exactly one point.

We now only need to show that $\psi$ can be extended to a bijective mapping on the points of $\pi$. Suppose that $P$ is a point of $\pi$, and $L_1$ and $L_2$ are two lines such that $L_1\cap \pi=L_2\cap \pi=\{P\}$. Since the plane $\langle L_1, L_2 \rangle$ is mapped to a plane intersecting $\pi$ in one point $P'$, both $L_1$ and $L_2$ are mapped to lines intersecting $\pi$ in $P'$. Consider a line $L_3$ intersecting $\pi$ in exactly $P$, such that $\langle L_1, L_3 \rangle$ intersects $\pi$ in a line. Then $\langle L_2, L_3 \rangle$ intersects $\pi$ only in $P$, hence also $L_3$ is mapped to a line intersecting $\pi$ in $P'$. So we can extend $\psi$ to a mapping on the points of $\pi$ by putting $\psi(P):=\psi(L_1)\cap \psi(L_2)$ for $L_1,L_2$ lines meeting $\pi$ exactly in the point $P$.

Now consider a line $L$ in $\pi$. Take a $3$-dimensional space $\mu$ intersecting $\pi$ in exactly $L$. Take two disjoint lines $M\neq L$ and $N\neq L$ in $\mu$; they are mapped to two disjoint lines spanning a $3$-dimensional space $\mu'$ intersecting $\pi$ in at most a line. The $q+1$ planes, spanned by $M$ and a point of $L$, all intersect $N$ in a point, hence their images lie in $\mu'$. The images of the points of $L$ are all different and lie in $\mu'$, hence they lie on a line $L'$ which has to be the intersection of $\mu'$ with $\pi$. It follows that $\psi$ also preserves lines of $\pi$.

We proved that every automorphism $\psi$ of $X(n,t,q)$ is induced by a mapping on the points and lines of $\PG(n+t,q)$ which is incidence preserving, hence is a collineation. From this, it is clear that the automorphism group of $X(n,t,q)$ is isomorphic to the stabiliser $\PGammaL(n+t+1,q)_{\pi}$ of $\pi$.
\end{proof}

%\begin{corollary}\label{aut}
%Since $T^*_n(\S) \cong X(n,t,q)$, we have $\Aut(T^*_n(\S))\cong \PGammaL(n+t+1,q)_{\pi}$.
%\end{corollary}

\section{$X(n,t,q)$  as a coset geometry}\label{sectionstefaan}

In this section we will see that $X(n,t,q)$ has a natural description as  a coset geometry. We will prove that $X(n,t,q)$ is isomorphic to the linear representation $T_n^*(\S)$ embedded in $\PG(n+1,q^t)$, where $\S$ is the subgeometry $\PG(n,q)$ of the hyperplane $H_\infty\cong\PG(n,q^t)$. Moreover, this will provide an elegant description of both the geometry and its automorphism group as determined in Section \ref{stabgeom}.

%\begin{remark}
%This case covers all possibilities of linear representations of non-trivial subgeometries, since a subgeometry $\PG(n,q^i)$ of $\PG(n,q^t)$ can be written as the subgeometry $\PG(n,\tilde{q})$ of $\PG(n,\tilde{q}^{\frac{t}{i}})$ where $\tilde{q}=q^i$ since $i|t$.
%\end{remark}

Without loss of generality, let $\pi$ be the $n$-dimensional space of $\PG(n+t,q)$ with equation $X_0=X_1=\ldots=X_{t-1}=0$.
We consider the embedding of the geometry $X(n,t,q)$ in $\PG(n+t,q)$. Recall that the point set $\P$ consists of the $(t-1)$-subspaces of $\PG(n+t,q)$ disjoint from $\pi$, and the line set $\L$ consists of the $t$-subspaces of $\PG(n+t,q)$ intersecting $\pi$ in exactly one point.

Consider the following $(n+1)$-dimensional spaces through $\pi$: $\Sigma_0: X_1=\ldots=X_{t-1}=0$, $\Sigma_j: X_0=\ldots=X_{j-1}=X_{j+1}=\ldots=X_{t-1}=0$, $j=1,\ldots,t-2$, $\Sigma_{t-1}: X_0=\ldots=X_{t-2}=0$. Every $(t-1)$-dimensional space $P \in \P$ is spanned by the set of $t$ unique points $\left\{ U_j=P \cap \Sigma_j \right\}_{j=0,\ldots,t-1}$, where $U_j$ has coordinates $(0, \ldots, 0, 1,0,\ldots,0,a_{0j},\ldots,a_{nj})$ with a $1$ at position $j$ and $a_{ij} \in \F_q$, for all $i=0,\ldots,n$, and $j=0,\ldots,t-1$.

For every $P \in \P$ we can consider a corresponding $(n+t+1)\times(n+t+1)$-matrix $A'_P$
\[A'_P=\left(\begin{matrix} I_t & 0 \\ A_P & I_{n+1} \end{matrix} \right),\] with the $(n+1) \times t$-matrix $A_P=(a_{ij})$, ${0\leq i \leq n, 0 \leq j \leq t-1} $, and where $I_k$ is the $k\times k$-identity matrix. Conversely every such matrix corresponds to a unique $P\in\P$.

Now let $G$ be the set consisting of all these matrices, \[G=\{A'_P \mid P \in \P\}.\] This set $G$, under operation of multiplication, forms a group. Note that
\[ A'_P A'_Q=\left(\begin{matrix} I_t & 0 \\ A_P + A_Q & I_{n+1} \end{matrix} \right) .\]
Hence, this group  is  elementary abelian of order $q^{(n+1)t}$. Furthermore, since every elementary abelian group corresponds to a vector space, this implies we will be able to interpret $X(n,t,q)$ as a geometry ``embedded'' in an $\F_p$-vector space, or equivalently, in an affine space over $\F_p$.

We define the action of $G$ on $\P$ by $A'_R(P)=Q$ if and only if $A'_RA'_P=A'_Q$. It is obvious that this is well defined and that the group $G$ acts sharply transitively on the point set $\P$ of $X(n,t,q)$.

One could also envision $G$ as a subgroup of $\PGL(n+t+1,q)$ and consider the action on the points of $\PG(n+t,q)$ by left-multiplication. The induced action on the elements of $\P$ is exactly the same as the one we defined earlier. Note that all the points of $\pi$ are fixed by the group $G$.

Next we will describe the lines of $X(n,t,q)$ in an algebraic way. Consider for every point $V \in \pi$ its coordinates $(0,\ldots,0,b_0,\ldots,b_n)$ (determined up to scalar multiple). We define the subgroup $G(V)$ of $G$ as follows:
\[G(V)=\left\{\left(\begin{matrix}  I_{t} & 0 \\ B_{\bar{a}} & I_{n+1} \end{matrix}\right) \mid \bar{a} \in \F_q^t \right\},\]
with the $(n+1)\times t$-matrix $B_{\bar{a}}=(b_i a_j )$, $0 \leq i \leq n, 0\leq j \leq t-1 $ for $\bar{a}=(a_0,a_1,\ldots,a_{t-1}) \in \F_q^t$. Clearly the group $G(V)$ is independent of the chosen coordinates for $V$.

For $0\leq i\leq n+t$, suppose the point $W_i$ of $\PG(n+t,q)$ has coordinates $w_i$, where $w_0=(1,0,\ldots,0),w_1=$ $(0,1,0,\ldots,0),\ldots,w_{n+t}=(0,\ldots,0,1)$.
The group $G(V)$ has size $q^t$ and stabilises the $t$-space $L_V=\langle V, W_0, W_1,\ldots,W_{t-1}\rangle$, note that this space is an element of $\L$. The group $G(V)$ fixes the point $V$ and acts transitively on the $t$-tuples $(V_0,\ldots,V_{t-1})$, where $V_i$ is a point of $\langle V, W_i\rangle\setminus\{V\}$.  Hence, it acts transitively on the $(t-1)$-spaces of $L_V$ not through $V$. These are exactly the elements of $\P$ contained in this line $L_V$ of $X(n,t,q)$. Furthermore, since this group has size $q^t$, this action is sharply transitive.

The space $I = \langle W_0, \ldots, W_{t-1} \rangle \in \P$ corresponds to the identity matrix of $G$. From the above, we learned that the lines of $X(n,t,q)$ through $I$ correspond to the subgroups  $G(V)$, $V\in\pi$. Furthermore, since $G$ (interpreted as a subgroup of $\PGL(n+t+1,q)$) fixes all points of $\pi$ and acts (sharply) transitively on the $(t-1)$-spaces disjoint from $\pi$, we easily deduce that the elements of $\L$ (the lines of $X(n,t,q)$) are in one to one correspondence with the cosets of $G(V)$ in $G$. However, at this point we can simplify notation if we take into account that all important properties of $G$ and the subgroups $G(V)$ are determined by the $(n+1)\times t$-submatrices in the lower left corner of the elements of $G$. Let ${\bf M}=M_q(n+1,t)$ be the group of all  $(n+1)\times t$-matrices over $\F_q$ under matrix addition.
We have obtained the following description of $X(n,t,q)$ as a coset geometry $\M=(\P_{\M},\L_{\M})$ with natural incidence (containment), point set $\P_{\M}$ and line set $\L_{\M}$ as follows:
\begin{itemize}
\item[$\P_{\M}$:] the elements of ${\bf M}$, that is, the $(n+1)\times t$-matrices over $\F_q$,
\item[$\L_{\M}$:] the cosets in ${\bf M}$ of the subgroups $L_{\bar{b}}:=\{\bar{b}^T\bar{a}\mid\bar{a}\in\F_q^t\}$, for all $ \bar{b} \in \F_q^{n+1}\setminus\{\overline{0}\}$.
\end{itemize}

There are exactly $\frac{q^n-1}{q-1}$ lines of type $L_{\bar{b}}$, since $L_{\bar{b}}=L_{\bar{c}}$, when $\bar{b}$ and $\bar{c}$ are scalar multiples of each other.

Note that this also provides a nice description of the point graph of our geometry as a Cayley graph: the vertices are the elements of ${\bf M}$, and two vertices are adjacent if and only if their difference (in ${\bf M}$) is of the form  $\bar{b}^T\bar{a}$ for some $\bar{a}\in\F_q^t$ and some $\bar{b} \in \F_q^{n+1}\setminus\{\overline{0}\}$.

Next we will find the lowest dimensional affine space in which $X(n,t,q)$ naturally embeds, and establish the isomorphism with $T_n^*(\S)$ in an algebraic way.
Let $f(x)=m_0+m_1x+\cdots+m_{t-1}x^{t-1}+x^t$ be an irreducible monic polynomial of degree $t$ over $\F_q$ used to construct $\F_{q^t}$. Let $M$ be the companion matrix of $f(x)$, that is,
\[M=\left(\begin{matrix}  0& I_{t-1} \\ -m_0 & -\overline{m} \end{matrix}\right), \]
with $\overline{m}=(m_1,\hdots,m_{t-1})$. Then it is well known from linear algebra (see for example \cite{Horn}, Theorem 3.3.14) that $f(x)$ is the minimal polynomial of $M$. Consequently, if we define
\[H=\{a_0I_t+a_1M+\cdots+a_{t-1}M^{t-1}\mid a_i\in\F_q\},\]
then $H$ has the structure of $\F_{q^t}$ under usual matrix addition and multiplication (this construction for example also appears in \cite{Lidl}). It  follows that $H\setminus\{0\}$ acts sharply transitively on the points of $\AG(t,q)$ different from $(0,\ldots,0)$.

Now define an action of $H$ on ${\bf M}$ in the following way:
\[(H,{\bf M})\rightarrow {\bf M}: (C,A_P)\mapsto A_PC.\]
It is readily checked that this makes ${\bf M}$ into an $H$-vector space, that is, an $\F_{q^t}$-vector space. However, if we consider this action restricted to the subgroups $L_{\bar{b}}$ of ${\bf M}$, then we see that we in fact obtain an action
\[(H,L_{\bar{b}})\rightarrow L_{\bar{b}}.\] This makes the $L_{\bar{b}}$ into $H$-vector subspaces of the $H$-vector space ${\bf M}$.

From the above it now follows that we can view our geometry $X(n,t,q)$ as a geometry embedded in an $(n+1)$-dimensional vector space over $\F_{q^t}$, where the lines through $I$ correspond to certain vector subspaces and the other lines to cosets of these subspaces, which are parallel subspaces when seen in $\AG(n,q^t)$. %Hence we obtain a representation of $X(n,t,q)$ as a generalised linear representation. 
Since the lines of $X(n,t,q)$ have size $q^t=\left|H\right|=\left|\F_{q^t}\right|$, these subspaces are one-dimensional and we obtain a linear representation of $X(n,t,q)$ as a point set in $\PG(n,q^t)$.

Let $q=p^h$, $p$ prime. Define the following \[\A=\{(A,B,C,l)\mid A\in{\bf M}, B\in\GL(n+1,q), C\in\GL(t,q), l\in\mathbb{Z}_h\},\]
and a binary operation $\circ$ on $\A$ as follows \[(A_2,B_2,C_2,l_2)\circ(A_1,B_1,C_1,l_1) = (B_2^{p^{-l_1}}A_1C_2^{p^{-l_1}}+A_2^{p^{-l_1}},B_2^{p^{-l_1}}B_1, C_1C_2^{p^{-l_1}},l_1+l_2).\]
Then $\A,_\circ$ is easily checked to be a group.

Next define an action of $\A$ on the points of $\M$ as follows \[ ((A,B,C,l), A_P)\mapsto \left(BA_PC+A\right)^{p^l}.\]
A simple verification now shows that this makes $\A$ into a group of automorphisms of $\M$.
The kernel of the described action clearly is $K=\{(0,\lambda I_{n+1},\lambda^{-1} I_t,0)\mid \lambda\in \F_q^*\}$.

Elements of the form $(A,I_{n+1},I_t,0)$ map every element of $\L_{\M}$ to one of its cosets, meaning it fixes the point set at infinity of the linear representation while permuting the lines that go through a given point at infinity.
An element $(0,I_{n+1},C,0)$ fixes every subgroup $L_{\bar{b}}$ and permutes the group elements of every subgroup, meaning it fixes the point set at infinity and the affine point corresponding to the zero matrix (meaning the space $I$), while permuting the points of every line of $\P$ through this point. Elements of the form $(0,I_{n+1},I_t, l)$ provide the semi-linear maps corresponding to the elements of $\Aut(\F_q)$.
The action of the subgroup $\{(0,B,I_t,0)\mid B\in\GL(n+1,q)\}\leq\A$ fixes the point corresponding to the zero matrix (that is, the origin in the corresponding linear representation) while permuting the lines that go through it. Hence this subgroup stabilises the point set at infinity of the linear representation.

We still need to uncover this point set at infinity. With every vector $\bar{b} \in \F_q^{n+1}\setminus\{\overline{0}\}$, up to scalar multiple, there is a corresponding point at infinity. Define a point-line incidence structure with as points the subgroups $L_{\bar{b}}$ and as lines the set of subgroups $\{L_{\bar{b_i}}|i=0,\ldots,q\}$ where the vectors $\bar{b_i}$ lie in a plane, meaning, the corresponding projective points lie on a projective line. This means the lines through the origin of the corresponding linear representation correspond naturally to the structure $\PG(n,q)$. Since this structure is stabilised by the subgroup $\{(0,B,I_t,0)\mid B\in\GL(n+1,q)\}$ of the automorphism group of $\M$, we know that our point set at infinity is a subgeometry isomorphic to $\PG(n,q)$ and hence that $\M\cong T_n^*(\S)$.

Finally, since this group has the size of the full automorphism group
\[\frac{\left|\A\right|}{\left| K\right|}=q^{(n+1)t}\left|\GL(n+1,q)\right|\left|\GL(t,q)\right|\left|\Aut(\F_q)\right|/(q-1)=\left|\PGammaL(n+t+1,q)_{\pi}\right|,\]

we see that $\A$ provides a natural description of the full automorphism group of $T_n^*(\S)$.
%Note that the geometric automorphism group can be described by the elements of $\A$ of the form $(A,B,I_t,l)$.

From this section we arrive to the following conclusion.

\begin{theorem}\label{aut}
The geometries $X(n,t,q)$ and $T^*_n(\S)$ are isomorphic and $\Aut(T^*_n(\S)) \cong \Aut(X(n,t,q)) \cong \PGammaL(n+t+1,q)_{\pi}$.
\end{theorem}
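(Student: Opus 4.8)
The plan is to assemble Theorem~\ref{aut} as a corollary of the work already done in Sections~\ref{stabgeom} and~\ref{sectionstefaan}, so the ``proof'' is really a matter of reading off and combining three ingredients. First I would record the isomorphism $X(n,t,q)\cong T_n^*(\S)$: by the coset-geometry analysis, $X(n,t,q)$ is isomorphic to $\M=(\P_\M,\L_\M)$, and the $H$-vector space structure on $\M$ together with the identification of the point set at infinity as a subgeometry $\PG(n,q)\subseteq\PG(n,q^t)$ shows $\M\cong T_n^*(\S)$. Second I would invoke the Lemma of Section~\ref{stabgeom}, which gives $\Aut(X(n,t,q))\cong\PGammaL(n+t+1,q)_\pi$. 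Third, I would combine these: since $X(n,t,q)\cong T_n^*(\S)$, their automorphism groups are isomorphic, so $\Aut(T_n^*(\S))\cong\Aut(X(n,t,q))\cong\PGammaL(n+t+1,q)_\pi$.

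The one genuine point that deserves care — and the step I expect to be the main obstacle to making the argument airtight rather than merely plausible — is the order count that closes Section~\ref{sectionstefaan}. I would argue as follows. The group $\A$ acts on $\M$ by automorphisms with kernel $K=\{(0,\lambda I_{n+1},\lambda^{-1}I_t,0)\mid\lambda\in\F_q^*\}$, so $\A/K$ embeds in $\Aut(\M)\cong\PGammaL(n+t+1,q)_\pi$. Computing $|\A|/|K| = q^{(n+1)t}\,|\GL(n+1,q)|\,|\GL(t,q)|\,|\Aut(\F_q)|/(q-1)$, I would check directly that this equals $|\PGammaL(n+t+1,q)_\pi|$; this is the routine-but-essential calculation, using that the stabiliser of an $n$-space $\pi$ in $\PGL(n+t+1,q)$ has the order of (unipotent radical)$\times\GL(n+1,q)\times\GL(t,q)$ divided by the scalars, times $h=|\Aut(\F_q)|$ for the field automorphisms. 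Equality of finite orders for an injective homomorphism forces surjectivity, so $\A/K\cong\PGammaL(n+t+1,q)_\pi$, which simultaneously confirms that $\A$ realises the \emph{full} automorphism group of $T_n^*(\S)$ and reproves the Lemma's conclusion in the coordinates at hand.

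Finally I would tie the strands together in one or two sentences: the explicit isomorphism of Section~\ref{sectionstefaan} identifies $X(n,t,q)$, $\M$, and $T_n^*(\S)$ as the same geometry; the Lemma computes the automorphism group of the first description; and the order count shows $\A$ exhausts it. Hence all three automorphism groups coincide with $\PGammaL(n+t+1,q)_\pi$, which is exactly the statement of Theorem~\ref{aut}. I would keep the written proof short, essentially a pointer to the preceding two sections, since every nontrivial assertion has already been established there; the only thing to spell out is that isomorphic geometries have isomorphic automorphism groups and that the displayed order identity upgrades the embedding $\A/K\hookrightarrow\Aut(T_n^*(\S))$ to an isomorphism.
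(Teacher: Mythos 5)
Your proposal is correct and follows essentially the same route as the paper: the theorem is stated there precisely as a summary of the Lemma of Section~\ref{stabgeom} together with the coset-geometry identification $X(n,t,q)\cong\M\cong T_n^*(\S)$ of Section~\ref{sectionstefaan}, with the same order computation $|\A|/|K|=|\PGammaL(n+t+1,q)_{\pi}|$ used to see that $\A$ realises the full automorphism group. The only cosmetic difference is that you make explicit the (trivial) step that isomorphic geometries have isomorphic automorphism groups, which the paper leaves implicit.
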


\section{The linear representation and its automorphism group}\label{sectionblowup}
%We have considered the linear representation $T^*_{n}(\S)$ of $\S\cong\PG(n,q)$, where $\S$ is a subgeometry $\PG(n,q)$ of $H_\infty\cong\PG(n,q^t)$. In Section \ref{sectionstefaan} we have proven that $T^*_n(\S)$ is isomorphic to $X(n,t,q)$ and hence it follows that $\Aut(T_n^*(\S)) \cong \Aut(X(n,t,q))$.
In this section, by some easy counting, we will show that while the automorphism group of $T_n^*(\S)$ is not induced by collineations of its ambient space, there is yet another setting where we do see that the automorphisms are induced by collineations of the ambient space, namely when we consider the generalised linear representation isomorphic to $T_n^*(\S)$.

\begin{definition} Let $\K$ be a set of disjoint $(t-1)$-dimensional subspaces in $\Pi_\infty\cong\PG(m,q)$, $q$ a prime power. Embed $\Pi_\infty$ as a hyperplane in $\PG(m+1,q)$. The {\em generalised linear representation} $T_{m,t-1}^*(\K)$ of $\K$ is the incidence structure $(\P', \L')$ with natural incidence and with:
\begin{itemize}%\setlength{\itemindent}{3.5em}
\item[$\P':$] the {\em affine} points of $\PG(m+1,q)$, i.e. the points of $\PG(m+1,q)\setminus \Pi_\infty$,
\item[$\L':$] the $t$-dimensional spaces of $\PG(m+1,q)$ containing a $(t-1)$-space of $\K$, but not lying in $\Pi_\infty$.
\end{itemize}
\end{definition}

When $t=1$, clearly this definition coincides with the definition of a linear representation.

By {\em field reduction}, the points of $\PG(n, q^t)$ correspond to the elements of a Desarguesian $(t - 1)$-spread $\D_{\infty}$ of $J_\infty\cong\PG(t(n+1)-1, q)$. We will denote the element of $\D_{\infty}$ corresponding to a point $P$ of $\PG(n, q^t)$ by $\mathcal{F}(P)$, and define $\mathcal{F}(\pi):=\{\mathcal{F}(P)\vert P\in \pi\}$ for a subset $\pi$ of $\PG(n,q^t)$.

If $\S$ is a subgeometry $\PG(n,q)$ of $H_{\infty}$, then the set $\mathcal{F}(\S)$ is a set of $\frac{q^{n+1}-1}{q-1}$ disjoint $(t-1)$-spaces in $J_{\infty}$. It is well-known that this set forms one of the two {\em systems} of the Segre variety $S_{n,t-1}$ in $\PG(t(n+1)-1,q)$.
%If $U$ is a subset of $\PG(t(n+1)-1, q)$, then we define $\B(U) := \{R \in \D \mid U\cap R \neq \emptyset\}$. We can identify the elements of $\B(U)$ with their corresponding points of $\PG(n, q^t)$, i.e. we will not always make the distinction between a point $P$ and its corresponding spread element $\mathcal{F}(P)$.
%
%Now the subgeometry $\S$ corresponds to $\B(\pi)$, where $\pi$ is an $n$-dimensional space of $J_\infty$; in fact, there are exactly $\frac{q^t-1}{q-1}$ spaces $\pi$ for which $\mathcal{F}(\S)=\B(\pi)$, namely every $n$-space of the second system of $S_{n,t-1}$.
For more information on field reduction and this Segre variety we refer to \cite{MichelGeertruiFieldReduction}.

Let us now recall the {\em representation of Barlotti-Cofman} \cite{Barlotti} of $\PG(n+1,q^t)$ inside $\PG(t(n+1),q)$. Here, the points of the hyperplane $H_\infty\cong\PG(n,q^t)$ in $\PG(n+1,q^t)$ are represented as $(t-1)$-dimensional spaces of a Desarguesian spread $\D_\infty$ in $J_\infty\cong\PG(t(n+1)-1,q)$. The affine points of $\PG(n+1,q^t)$ with respect to $H_\infty$ can be identified with the affine points of a projective space $\PG(t(n+1),q)$, with respect to the hyperplane $J_\infty$. The lines of $\PG(n+1,q^t)$ intersecting $H_\infty$ in a point correspond to the $t$-dimensional spaces of $\PG(t(n+1),q)$, meeting $J_\infty$ in an element of $\D_\infty$. %As seen before, there exists a $n$-dimensional subspace $\pi$ in $J_\infty$ such that the elements of $\mathcal{F}(\S)$ correspond to the points of $\S$.

We consider the Barlotti-Cofman representation of the points and lines of the linear representation $T^*_n(\S)$. This means that the points of $\S$ in the hyperplane $H_\infty$ are represented as the set of elements $\F(\S)$ in the Desarguesian $(t-1)$-spread $\D_\infty$, the affine points of $\PG(n+1,q^t)$ correspond to affine points of $\PG(t(n+1),q)$ and the lines of $T_n^*(\S)$ correspond to $t$-spaces, having an element of $\F(\S)$ at infinity. In this way we get the generalised linear representation $T_{t(n+1)-1,t-1}^*(\mathcal{F}(\S))$, which is then clearly isomorphic to $T^*_n(\S)$.

It is clear that the group $\PGammaL(t(n+1)+1,q)_{\mathcal{F}(\S)}$ stabilises the generalised linear representation $T_{t(n+1)-1,t-1}^*(\mathcal{F}(\S))$ and hence is isomorphic to a subgroup of $\Aut(X(n,t,q)) \cong \PGammaL(n+t+1,q)_{\pi}$. We will see by counting that the groups are in fact isomorphic.

\begin{definition}
For a hyperplane $H$ of $\PG(m,q)$ we denote the set of elements of $\PGammaL(m+1,q)$ fixing all points of
the hyperplane $H$ as $\Persp_q(H)$. Note that this is in fact a subgroup of $\PGL(m+1,q)$. Clearly, the group $\Persp_q(H)$ %consists of all elations and homologies with axis $H$, and
has size $|\Persp_q(H)|=q^{m}(q-1)$.
\end{definition}
\begin{result}{\rm \label{ext} \cite{wij2}} Suppose $\K$ is a subset of a hyperplane $H$ of $\PG(m,q)$ such that $\langle \K \rangle = H$. The group $\PGammaL(m+1,q)_\K$ is an extension of $\Persp_q(H)$ by $\PGammaL(m,q)_\K$ and $\PGL(m+1,q)_\K$ is an extension of $\Persp_q(H)$ by $\PGL(m,q)_\K$.
\end{result}

\begin{result}{\rm \label{stabsegre}\cite[Theorem 25.5.13]{GGG}}
The projective automorphism group of a Segre variety $S_{l,k}$ of $\PG((l+1)(k+1)-1, q)$ is either
isomorphic to $\PGL(l + 1, q) \times \PGL(k + 1, q)$ if $l \neq k$ or is isomorphic to $(\PGL(l +1,q) \times \PGL(k + 1, q)) \rtimes C_2$ if $l = k$.
\end{result}

To denote the number of $(k-1)$-dimensional subspaces of $\PG(m-1,q)$, we will use the following notation: \[\left[\begin{matrix} m \\ k \end{matrix} \right]_q=\frac{(q^m-1)(q^{m-1}-1)\ldots(q^{m-k+1}-1)}{(q^k-1)(q^{k-1}-1)\ldots(q-1)}.\]
\begin{theorem}
$\Aut(T^*_n(\S)) \cong \PGammaL(n+t+1,q)_{\pi} \cong \PGammaL(t(n+1)+1,q)_{\mathcal{F}(\S)}$
\end{theorem}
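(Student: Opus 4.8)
By Theorem~\ref{aut} we already have $\Aut(T^*_n(\S))\cong\PGammaL(n+t+1,q)_{\pi}$, so the plan is to establish the remaining isomorphism $\PGammaL(n+t+1,q)_{\pi}\cong\PGammaL(t(n+1)+1,q)_{\mathcal{F}(\S)}$ by a counting argument. As observed just before the statement, $\PGammaL(t(n+1)+1,q)_{\mathcal{F}(\S)}$ acts on the generalised linear representation $T^*_{t(n+1)-1,t-1}(\mathcal{F}(\S))\cong T^*_n(\S)$, and this action is faithful because the affine points of $\PG(t(n+1),q)$ span the space; hence this group is isomorphic to a subgroup of $\Aut(T^*_n(\S))\cong\PGammaL(n+t+1,q)_{\pi}$. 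Since two finite groups, one isomorphic to a subgroup of the other, are isomorphic as soon as they have equal order, it remains to compute $|\PGammaL(t(n+1)+1,q)_{\mathcal{F}(\S)}|$ and compare it with the order of $\PGammaL(n+t+1,q)_{\pi}$ found at the end of Section~\ref{sectionstefaan}.

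First I would note that $J_\infty\cong\PG(t(n+1)-1,q)$ is a hyperplane of $\PG(t(n+1),q)$ and that $\mathcal{F}(\S)$ spans $J_\infty$, being one of the two systems of the Segre variety $S_{n,t-1}$, whose point set is the union of that system and which spans $J_\infty$. Applying Result~\ref{ext} with $m=t(n+1)$, $H=J_\infty$ and $\K=\mathcal{F}(\S)$ gives
\[ |\PGammaL(t(n+1)+1,q)_{\mathcal{F}(\S)}| = |\Persp_q(J_\infty)|\cdot|\PGammaL(t(n+1),q)_{\mathcal{F}(\S)}| = q^{t(n+1)}(q-1)\cdot|\PGammaL(t(n+1),q)_{\mathcal{F}(\S)}|. \]
Next I would determine $|\PGammaL(t(n+1),q)_{\mathcal{F}(\S)}|$, the stabiliser in the collineation group of $J_\infty$ of one system of $S_{n,t-1}$. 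By Result~\ref{stabsegre} the stabiliser of $S_{n,t-1}$ inside $\PGL(t(n+1),q)$ is $\PGL(n+1,q)\times\PGL(t,q)$ for $n\neq t-1$, or $(\PGL(n+1,q)\times\PGL(t,q))\rtimes C_2$ for $n=t-1$, where the factor $C_2$ interchanges the two systems; in both cases the subgroup stabilising the single system $\mathcal{F}(\S)$ has order $|\PGL(n+1,q)|\cdot|\PGL(t,q)|$. After an appropriate choice of coordinates coming from field reduction (cf.\ \cite{MichelGeertruiFieldReduction}) the variety $S_{n,t-1}$ is in standard Segre form and each of its systems is invariant under the coordinate-wise field automorphism $(z_i)\mapsto(z_i^p)$; hence the semilinear elements stabilising $\mathcal{F}(\S)$ realise all of $\Aut(\F_q)$, and conversely the field-automorphism part of any element of $\PGammaL(t(n+1),q)_{\mathcal{F}(\S)}$ lies in $\Aut(\F_q)$, so that $|\PGammaL(t(n+1),q)_{\mathcal{F}(\S)}| = |\PGL(n+1,q)|\,|\PGL(t,q)|\,|\Aut(\F_q)| = |\GL(n+1,q)|\,|\GL(t,q)|\,h/(q-1)^2$, writing $q=p^h$.

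Substituting this back yields $|\PGammaL(t(n+1)+1,q)_{\mathcal{F}(\S)}| = q^{t(n+1)}|\GL(n+1,q)|\,|\GL(t,q)|\,h/(q-1)$, which, using $t(n+1)=(n+1)t$ and $|\Aut(\F_q)|=h$, coincides with the order $q^{(n+1)t}|\GL(n+1,q)|\,|\GL(t,q)|\,|\Aut(\F_q)|/(q-1)$ of $\PGammaL(n+t+1,q)_{\pi}$ computed at the end of Section~\ref{sectionstefaan}. Therefore the subgroup of $\Aut(T^*_n(\S))$ isomorphic to $\PGammaL(t(n+1)+1,q)_{\mathcal{F}(\S)}$ is all of it, and combining with Theorem~\ref{aut} we obtain $\Aut(T^*_n(\S))\cong\PGammaL(n+t+1,q)_{\pi}\cong\PGammaL(t(n+1)+1,q)_{\mathcal{F}(\S)}$.

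I expect the only genuinely delicate point to be the computation of $|\PGammaL(t(n+1),q)_{\mathcal{F}(\S)}|$: one must be sure both that passing from the variety $S_{n,t-1}$ to a single system removes exactly the $C_2$ and nothing more, and that the whole group $\Aut(\F_q)$ of field automorphisms stabilises $\mathcal{F}(\S)$ — the latter requiring the explicit compatibility between the Barlotti--Cofman / field-reduction coordinates and a standard Segre embedding. Once that is settled, the remainder is routine arithmetic with the orders of the general linear groups.
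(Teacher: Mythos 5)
Your proposal is correct and follows essentially the same route as the paper: embed $\PGammaL(t(n+1)+1,q)_{\mathcal{F}(\S)}$ into $\Aut(T^*_n(\S))$ via its action on the generalised linear representation, then match orders using Result~\ref{ext} for the cone over $J_\infty$ and Result~\ref{stabsegre} for the stabiliser of one system of the Segre variety $S_{n,t-1}$ (including the removal of the $C_2$ when $t=n+1$ and the extension by $\Aut(\F_q)$). The only cosmetic difference is that you compare against the order of $\PGammaL(n+t+1,q)_\pi$ as computed at the end of Section~\ref{sectionstefaan}, whereas the paper recomputes it via transitivity on $n$-subspaces; the arithmetic agrees in both cases.
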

\begin{proof}
The full automorphism group of $T^*_n(\S)$ is isomorphic to $\PGammaL(n+t+1,q)_{\pi}$, see Theorem \ref{aut}. Since $\PGammaL(n+t+1,q)$ acts transitively on the $n$-spaces of $\PG(n+t,q)$, we find the following:
\begin{align*}
|\PGammaL(n+t+1,q)_{\pi}| &= \frac{|\PGammaL(n+t+1,q)|}{\left[\begin{matrix} n+t+1 \\ n+1 \end{matrix} \right]_q}\\
&= q^{t(n+1)}q^{\frac{t(t-1)}{2}}(q^t-1)\ldots(q-1)|\PGammaL(n+1,q)|.
\end{align*}
Now we calculate the size of $\PGammaL(t(n+1)+1,q)_{\mathcal{F}(\S)}$. By Result \ref{ext}, since $\mathcal{F}(\S)$ spans $J_\infty$, we find:
\begin{align*}
|\PGammaL(t(n+1)+1,q)_{\mathcal{F}(\S)}| &= |\Persp_q(J_\infty)||\PGammaL(t(n+1),q)_{\mathcal{F}(\S)}|. \\
\end{align*}%
As seen before, the set of points contained in $\mathcal{F}(\S)$ forms a Segre variety $S_{n,t-1}$.   Hence the stabiliser of $\mathcal{F}(\S)$ is the stabiliser of the Segre variety  that in the case $t=n+1$ does not switch the two systems. Thus, by Result \ref{stabsegre} we find $\PGL(t(n+1),q)_{\mathcal{F}(\S)}= \PGL(n +1,q) \times \PGL(t, q)$. The semilinear automorphisms stabilising any one of the systems of the Segre variety naturally extend to elements of $\PGammaL(t(n+1),q)$. Hence $\left|\PGammaL(t(n+1),q)_{\mathcal{F}(\S)}\right|= \left|\PGL(n +1,q)\right| \left|\PGL(t, q)\right|\left|\Aut(\F_q)\right|$.

We conclude that $\PGammaL(t(n+1)+1,q)_{\mathcal{F}(\S)}$ has the same size as $\PGammaL(n+t+1,q)_{\pi}$:
\begin{align*}
|\PGammaL(t(n+1)+1,q)_{\mathcal{F}(\S)}| &= |\Persp_q(J_\infty)||\PGL(n+1,q)||\PGL(t,q)|\left|\Aut(\F_q)\right| \\
&=|\Persp_q(J_\infty)||\PGammaL(n+1,q)||\PGL(t,q)| \\
&= q^{t(n+1)}(q-1)q^{\frac{t(t-1)}{2}}(q^t-1)\ldots(q^{2}-1)|\PGammaL(n+1,q)|.
\end{align*}

Clearly every collineation of $\PGammaL(t(n+1)+1,q)_{\mathcal{F}(\S)}$ is a non-trivial element of $\Aut(T^*_n(\S))$. Both groups have the same size, so $\Aut(T^*_n(\S)) \cong \PGammaL(t(n+1)+1,q)_{\mathcal{F}(\S)}$.
\end{proof}

The subgroup of the automorphism group of the linear representation $T^*_n(\S)$ for which the elements are induced by collineations of the space $\PG(n+1,q^t)$ will be called the {\em geometric automorphism group}. Since $\S$ spans $H_\infty$, this group is isomorphic to $\PGammaL(n+2,q^t)_\S$.

\begin{theorem}
The full automorphism group of $T^*_n(\S)$ is $\frac{1}{t}q^{\frac{t(t-1)}{2}}(q^{t-1}-1)\ldots(q^2-1)(q-1)$ times larger than the geometric automorphism group of $T^*_n(\S)$.
\end{theorem}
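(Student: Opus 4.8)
The plan is to divide the order of the full automorphism group by that of the geometric one. By Theorem~\ref{aut} (and the order computation carried out in the proof of the previous theorem),
$$|\Aut(T^*_n(\S))| = |\PGammaL(n+t+1,q)_\pi| = q^{t(n+1)}q^{\frac{t(t-1)}{2}}(q^t-1)(q^{t-1}-1)\cdots(q-1)\,|\PGammaL(n+1,q)|,$$
while the geometric automorphism group is $\PGammaL(n+2,q^t)_\S$ since $\S$ spans $H_\infty$. So it suffices to show that
$$|\PGammaL(n+2,q^t)_\S| = t\,q^{t(n+1)}(q^t-1)\,|\PGammaL(n+1,q)|,$$
after which the ratio, upon cancelling the common factor $q^{t(n+1)}(q^t-1)|\PGammaL(n+1,q)|$, simplifies to $\tfrac1t q^{\frac{t(t-1)}{2}}(q^{t-1}-1)\cdots(q^2-1)(q-1)$ as claimed.

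To compute $|\PGammaL(n+2,q^t)_\S|$ I would first peel off the perspectivities: since $\S$ spans the hyperplane $H_\infty$ of $\PG(n+1,q^t)$, Result~\ref{ext} applied over $\F_{q^t}$ gives that $\PGammaL(n+2,q^t)_\S$ is an extension of $\Persp_{q^t}(H_\infty)$ by $\PGammaL(n+1,q^t)_\S$, whence
$$|\PGammaL(n+2,q^t)_\S| = |\Persp_{q^t}(H_\infty)|\cdot|\PGammaL(n+1,q^t)_\S| = q^{t(n+1)}(q^t-1)\cdot|\PGammaL(n+1,q^t)_\S|.$$
It then remains to determine the order of the stabiliser of a subgeometry $\S\cong\PG(n,q)$ of $\PG(n,q^t)$ inside $\PGammaL(n+1,q^t)$.

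For this I would use the induced action of $\PGammaL(n+1,q^t)_\S$ on $\S$. Any element of the kernel fixes every point of $\S$, hence fixes a frame of $\PG(n,q^t)$ pointwise and is therefore, up to a scalar, of the form $\bar x\mapsto\bar x^\phi$ for some $\phi\in\Aut(\F_{q^t})$; fixing the remaining $\F_q$-rational points of $\S$ then forces $\phi$ to fix $\F_q$ pointwise, i.e.\ $\phi\in\mathrm{Gal}(\F_{q^t}/\F_q)$, a cyclic group of order $t$. Conversely the Frobenius $x\mapsto x^q$ clearly fixes $\S$ pointwise, so the kernel is cyclic of order exactly $t$. On the other hand every collineation of $\S\cong\PG(n,q)$ extends to a collineation of $\PG(n,q^t)$ fixing $\S$, so the action is onto $\PGammaL(n+1,q)$. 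Hence $|\PGammaL(n+1,q^t)_\S| = t\,|\PGammaL(n+1,q)|$, and substituting back yields the displayed order of the geometric group, completing the proof.

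The order computations themselves are routine; the only delicate point is the kernel calculation in the last paragraph, where one must observe that $\mathrm{Gal}(\F_{q^t}/\F_q)$ acts trivially on the subgeometry $\S$ but not on the ambient $\PG(n,q^t)$ — and it is precisely this ``extra'' cyclic factor of order $t$, coming from the field automorphisms of $\F_{q^t}$ over $\F_q$, that is responsible for the factor $\tfrac1t$ appearing in the statement. Equivalently, one can identify $\PGammaL(n+1,q^t)_\S\cong\PGL(n+1,q)\rtimes\Aut(\F_{q^t})$, of order $|\PGL(n+1,q)|\cdot ht = t\,|\PGammaL(n+1,q)|$ when $q=p^h$, using that every automorphism of $\F_{q^t}$ stabilises the subfield $\F_q$ and that a semilinear map stabilises $\S$ if and only if its matrix can be rescaled to have entries in $\F_q$.
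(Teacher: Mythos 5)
Your proposal is correct and follows essentially the same route as the paper: both compute $|\PGammaL(n+2,q^t)_\S|$ by splitting off $\Persp_{q^t}(H_\infty)$ via Result~\ref{ext} and identifying the stabiliser of the subgeometry in $\PGammaL(n+1,q^t)$ as having order $t\,|\PGammaL(n+1,q)|$ (the paper writes this factor as $|\Aut(\F_{q^t})/\Aut(\F_q)|\,|\PGammaL(n+1,q)|$), then divide into the order of $\PGammaL(n+t+1,q)_\pi$ from the previous theorem. Your kernel/image argument for the factor $t$ simply makes explicit a step the paper states without proof.
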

\begin{proof}

The geometric automorphism group of $T^*_n(\S)$ is isomorphic to $\PGammaL(n+2,q^t)_{\S}$ and has the following size:
$$
|\PGammaL(n+2,q^t)_{\S}| = |\Persp_{q^t}(H_\infty)| |\Aut(\F_{q^t}) / \Aut(\F_q)| |\PGammaL(n+1,q)|$$
$$=q^{t(n+1)}(q^t-1)t|\PGammaL(n+1,q)|.$$

\vspace{-0.5cm}
\end{proof}

We end with a full answer to the isomorphism problem of linear representations $T_n^*(\K)$ and $T_n^*(\K')$ of point sets $\K$, $\K'$.

\begin{theorem} Let $\K$ and $\K'$ denote point sets in $H_\infty\cong\PG(n,q^t)$ such that the closure $\overline{\K}$ is a subgeometry $\PG(n,q)$ of $H_{\infty}$ and let $\alpha$ be an isomorphism between $T_n^*(\K)$ and $T_n^*(\K')$. Then $\alpha$ is induced by an element of $\PGammaL(t(n+1)+1,q)_{J_\infty}$ mapping $\FF(\K)$ onto $\FF(\K')$.
\end{theorem}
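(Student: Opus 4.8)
The plan is to run $\alpha$ through two reductions and then apply the description of $\Aut(T_n^*(\S))$ obtained above. Since $\S:=\overline{\K}\cong\PG(n,q)$ is a \emph{proper} subgeometry of $H_\infty\cong\PG(n,q^t)$ (so $t\geq 2$ and $q^t>2$), Theorem~\ref{mainmap} applies: $\S':=\overline{\K'}$ is again an $n$-dimensional subgeometry isomorphic to $\PG(n,q)$, and $\alpha$ is induced by an isomorphism $\beta\colon T_n^*(\S)\to T_n^*(\S')$ with $\beta(\S)=\S'$; moreover $\beta(\K)=\K'$, since $\beta$ carries the ``point at infinity'' structure of a line to that of its image and $\alpha$ is the restriction of $\beta$ to the lines of $T_n^*(\K)$. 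It therefore suffices to realise $\beta$ as the geometric action of a suitable collineation of $\PG(t(n+1),q)$.

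I would first pass to an automorphism. Choose $g\in\PGammaL(n+2,q^t)_{H_\infty}$ with $g(\S')=\S$; such a $g$ exists because $\PGL(n+1,q^t)$ is transitive on the $n$-dimensional $\F_q$-subgeometries of $\PG(n,q^t)$ (each is the unique subgeometry through any of its frames, and all frames are projectively equivalent), and one extends from $H_\infty$ to $\PG(n+1,q^t)$ by fixing a complementary point. Field reduction turns $g$ into $\tilde g\in\PGammaL(t(n+1)+1,q)_{J_\infty}$ with $\tilde g(\FF(\S'))=\FF(\S)$ and $\tilde g(\FF(\K'))=\FF(g(\K'))$, and the isomorphism $\hat g\colon T_n^*(\S')\to T_n^*(\S)$ induced by $g$ is exactly the action of $\tilde g$ on the generalised linear representations $T^*_{t(n+1)-1,t-1}(\FF(\S'))\cong T_n^*(\S')$ and $T^*_{t(n+1)-1,t-1}(\FF(\S))\cong T_n^*(\S)$ of the Barlotti--Cofman representation. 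Then $\psi:=\hat g\circ\beta$ is an automorphism of $T_n^*(\S)$ with $\psi(\S)=\S$ and $\psi(\K)=\K'':=g(\K')\subseteq\S$.

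Now I would invoke Theorem~\ref{aut} together with the theorem identifying $\Aut(T_n^*(\S))$ with $\PGammaL(t(n+1)+1,q)_{\FF(\S)}$: the natural action of $\PGammaL(t(n+1)+1,q)_{\FF(\S)}$ on $T^*_{t(n+1)-1,t-1}(\FF(\S))\cong T_n^*(\S)$ is faithful and, by the order count in that proof, surjective onto $\Aut(T_n^*(\S))$. Hence $\psi$ is the action of some $\tilde\psi\in\PGammaL(t(n+1)+1,q)_{\FF(\S)}$, and since $\FF(\S)$ spans $J_\infty$ we get $\tilde\psi\in\PGammaL(t(n+1)+1,q)_{J_\infty}$; as $\tilde\psi$ induces $\psi$, it sends the spread element $\FF(P)$ to $\FF(\psi(P))$ for every point $P$ at infinity, so $\tilde\psi(\FF(\K))=\FF(\psi(\K))=\FF(\K'')$. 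Unwinding, $\beta=\hat g^{-1}\circ\psi$ is induced by $\tilde g^{-1}\tilde\psi\in\PGammaL(t(n+1)+1,q)_{J_\infty}$, which maps $\FF(\K)$ to $\tilde g^{-1}(\FF(\K''))=\FF(g^{-1}(\K''))=\FF(\K')$. Since $\alpha$ is the restriction of $\beta$ to $T_n^*(\K)$, in the Barlotti--Cofman model $\alpha$ is induced by $\tilde g^{-1}\tilde\psi\in\PGammaL(t(n+1)+1,q)_{J_\infty}$ and this element maps $\FF(\K)$ onto $\FF(\K')$, as required.

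The hard part is not an estimate but the bookkeeping that glues three ``induced by'' relations together: that the abstract isomorphism $\hat g$ coincides with the geometric $\tilde g$-action on the Barlotti--Cofman models, that any $\tilde\psi$ inducing $\psi$ satisfies $\tilde\psi(\FF(P))=\FF(\psi(P))$ on points at infinity, and that these compose as claimed; one must also be careful that the surjectivity onto $\Aut(T_n^*(\S))$ is only guaranteed through the order count, so the geometric realisation used here must be the one being counted. A minor additional point is to confirm the hypotheses of Theorem~\ref{mainmap} for $(\K,\K')$ --- in particular that the existence of $\alpha$ already forces $\overline{\K'}$ to be an $n$-dimensional subgeometry isomorphic to $\PG(n,q)$ --- which follows by tracking a frame of $\K$ through the $\alpha$-rigidity results of Section~\ref{sectionsubsets}.
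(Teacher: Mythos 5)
The paper states this theorem without proof, as a corollary assembled from Theorem \ref{mainmap}, Theorem \ref{aut} and the identification of $\Aut(T_n^*(\S))$ with $\PGammaL(t(n+1)+1,q)_{\FF(\S)}$ acting on the Barlotti--Cofman model, and your argument is exactly this intended assembly (reduction to the closures, transitivity of $\PGammaL(n+2,q^t)_{H_\infty}$ on $n$-dimensional $\F_q$-subgeometries to pass from an isomorphism to an automorphism of $T_n^*(\S)$, then surjectivity of the natural action via the order count). Your proof is correct; the only caveat, which you flag yourself, is that the hypotheses on $\K'$ (that $\overline{\K'}$ is again an $n$-dimensional subgeometry of order $q$) must be extracted from the $\alpha$-rigidity results before Theorem \ref{mainmap} can be invoked, a point the paper's own statement also passes over silently.
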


\noindent
Affiliation of the authors:\\

\noindent
sgdewint\makeatletter @mtu.edu\\
Department of Mathematics, Michigan Technological University\\
1400 Townsend Drive, Houghton, MI 49931, United States\\

\noindent
srottey\makeatletter @vub.ac.be\\
Department of Mathematics, Vrije Universiteit Brussel\\
Pleinlaan 2, 1050 Brussel, Belgium\\

\noindent
gvdevoorde\makeatletter @cage.ugent.be\\
Department of Mathematics, UGent\\
Krijgslaan 281-S22, 9000 Gent, Belgium\\

\end{document}